\spnewtheorem*{remark*}{Remark}{\it}{\rm}
\spnewtheorem*{remarks*}{Remarks}{\it}{\rm}
\spnewtheorem*{firstproof*}{First proof}{\it}{\rm}
\spnewtheorem*{secondproof*}{Second proof}{\it}{\rm}
\newcommand{\FSop}{\operatorname{FS^{op}}}
\newcommand{\FS}{\operatorname{FS}}
\newcommand{\rmod}{\vrule width 0mm height 0 mm depth
  0mm_R\mathbf{Mod}}
\DeclareMathAlphabet{\ams}{U}{msb}{m}{n}
\DeclareMathAlphabet{\goth}{U}{euf}{m}{n}
\def\Z{\ams{Z}}
\def\N{\ams{N}}
\def\Q{\ams{Q}}
\def\symn{S_{\kern-1pt n}}
\def\sym5{S_{\kern-1pt 5}}
\def\ss{\sigma}
\def\tt{\tau}
\def\ra{\rightarrow}
\def\ov{\overline}
\DeclareRobustCommand{\stirling}{\genfrac\{\}{0pt}{}}
\def\sgn{\mathrm{sgn}}
\def\Sp{\mathrm{Sp}}
\def\partition{\,\,\begin{tikzpicture}
\draw[line width=0.7pt](0,0)--(0,0.2);
\draw[line width=0.7pt](0,0.1)--(0.175,0.1);
\end{tikzpicture}\,}
\newcommand{\Sym}[1]{S_{\kern-1pt {#1}}}
\title{Homology of matching complexes and representations
of symmetric groups} 
\author{Michael Bate 
$\cdot$
Brent Everitt 
$\cdot$
Sam Ford
$\cdot$
Eric Ramos
\thanks{The authors thank Steve Donkin for suggesting 
  the original problem of this paper.
  The first two authors also thank Chris 
  Bowman-Scargill and Harry Geranios for helpful discussions.
The fourth author would like to send thanks to Brendon Rhoades for
some useful suggestions during the writing of this work
and was supported by NSF grants DMS-2452031 and DMS-2137628.}
}
\institute{Michael Bate,
  Brent Everitt,
  Sam Ford
  \at
Department of Mathematics, 
University of York, 
York
YO10 5DD, UK. 
\email{michael.bate@york.ac.uk,
brent.everitt@york.ac.uk,
sam.ford7@gmx.co.uk}
\and
Eric Ramos
\at
Department of Mathematical Sciences,
Stevens Institute of Technology, Hoboken, NJ 07030, USA.
\email{eramos3@stevens.edu}
}
\titlerunning{Homological representations of symmetric groups}
\authorrunning{Michael Bate, Brent Everitt, Sam Ford, Eric Ramos}
\begin{document}

\maketitle

\begin{abstract}
We compute the homology of the matching complex
$M(\Gamma)$, where $\Gamma$ is the complete hypergraph on $n\geq 2$
vertices, and analyse the $\Sym{n}$-representations carried by this
homology. These results are achieved using standard techniques in
combinatorial topology, such as the theory of shellings.
We then broaden the scope to the larger class of fibre-closed families of
simplicial complexes and
consider these through the lens of representation stability.
This allows us to prove a number of results of an asymptotic nature,
such as an analysis of the growth of Betti numbers and the
kinds of irreducible $\symn$-representations that
appear.
\end{abstract} 

\maketitle


\section*{Introduction}

Let $G$ be a finite group, $p$ a prime dividing the order of $G$ and
$\mathcal{S}_p(G)$ the poset of $p$-subgroups of $G$ ordered 
by inclusion. 
Brown's homological Sylow theorem \cite{MR0385008} states that
the simplicial order complex of $\mathcal{S}_p(G)$ 
has Euler characteristic congruent to $1$ modulo 
the order of a Sylow $p$-subgroup of $G$. This led
Quillen \cite{MR0493916} to show that $\mathcal{S}_p(G)$ is 
homotopy equivalent to
the poset $\mathcal{A}_p(G)$ of elementary 
Abelian $p$-subgroups
of $G$ and to formulate a number of fundamental conjectures.
These results have 
established $\mathcal{S}_p(G)$ and 
$\mathcal{A}_p(G)$
as central objects
in the interplay between geometric combinatorics and
representation theory.

Even for the 
symmetric groups these complexes
are not well-understood. Bouc \cite{MR1174893} thus
introduced the matching complex $M_n$, which 
is closely related to 
$\mathcal{S}_2(\Sym{n})$ and $\mathcal{A}_2(\Sym{n})$.
In particular, if $k$ is a field of characteristic $0$ then the 
natural $\Sym{n}$-action
on $M_n$ gives the reduced homology
$\widetilde{H}_{q}(M_n,k)$ the structure of a $
k\Sym{n}$-module, and Bouc describes its decomposition
into irreducibles. 
On the other hand, the integral homology of $M_n$ displays 
complicated torsion behavior
and is less well-understood.
This has led to much work in recent years
on matching complexes and their various 
generalisations.

This paper starts --- see Theorem \ref{thm:homology} ---
by computing the homology of one of these 
generalisations: the matching complex
$M(\Gamma)$, where $\Gamma$ is the complete hypergraph on $n\geq 2$
vertices. (The original  matching complex $M_n$ is equal to $M(\Gamma)$
where $\Gamma$ is the complete \emph{graph\/} on $n$ vertices.)
Our main technical tool here is the theory of shellings.

We then analyse the resulting $\Sym{n}$-representations carried by the
homology and decompose them (Theorem \ref{thm:homologyinduced} below)
as a sum of induced representations from the normalisers of standard
Young subgroups $\Sym{\lambda}$, for partitions $\lambda\partition n$
having a certain shape. If $\lambda$ has no repeated parts then
this gives a decomposition \eqref{eq:kostka}
of the corresponding part of the homology into Specht modules.
Such a decomposition in general seems to be out of reach, so 
in lieu of this we appeal to the theory of representation stability
and study the asymptotics of the $\symn$-representations
as $n$ grows large. This, it turns out, we can do for a larger
class of simplicial complexes that we term \emph{fibre closed},
and categorical representation theory allows us to
expose the overall ``shape" of the representations; this we do in
Theorem \ref{mainthm}.


\section{Matching complexes and their homology}
\label{section:complexes}

We begin by recalling basic terminology 
and notation for simplicial complexes --- standard references for this 
material are \cite{MR1402473}*{Section 2.3} or 
\cite{MR0210112}*{Section 3.1}.
We then recall the general matching complex $M(\Gamma)$ of a
hypergraph $\Gamma$;
see 
\cite{MR1013569} for background on hypergraphs,
\cite{MR2536865} for matching theory and 
\cite{MR2368284} for a survey of matching and related complexes.
The homology and the representations it carries are given in
Section \ref{subsection:homology}, along with the shellings that we
use to compute it. 
A nice survey of general simplicial methods in
algebra and combinatorics is \cite{Wachs07}; see also \cite{MR2022345}
for representations carried by the homology of matching complexes. 
For shellings, especially in the non-pure case, we follow
\cite{MR1333388}. 

\subsection{Matching complexes}
\label{subsection:matching}

A (finite) simplicial complex $X$ with vertex set $X_0$ is 
a set of distinguished subsets of $X_0$ with the property 
that $\{x\}\in X$ for all $x\in X_0$, and
if $\ss\in X$ and $\tt\subseteq\ss$ then $\tt\in X$. 
If $\ss\in X$ with $|\ss|=q+1$ then 
$\ss$ is called a $q$-(dimensional) face of $X$. 
The empty set
$\varnothing$ is the unique face of dimension $-1$. 
Write $X_q$ for the set of $q$-faces.

A face $\ss$ that is maximal under inclusion is called a 
facet
and a complex where all the facets have the same dimension
is said to be pure. The dimension of $X$ is the maximal $q$
for which there exists a $q$-facet.
A $q$-simplex
$\overline{\ss}$ is the
simplicial subcomplex obtained by considering a $q$-face $\ss$ and
all of its subsets; in particular, if $\ss$ is a 
facet then $\overline{\ss}$ is called
a maximal subsimplex. A map
$f:X\ra Y$ of simplicial complexes is a set map $f:X_0\ra Y_0$ such that
$f(\ss)\in Y$ for all $\ss\in X$. A group $G$ acts on $X$ if $G$ acts on the 
vertices $X_0$ in such a way that for any $g\in G$ and face $\ss$, the image subset 
$g\ss$ is also a face of $X$.

Now to matching complexes.
A hypergraph $\Gamma$ consists of a set of vertices $V$ and a
set of non-empty subsets of $V$ called hyperedges. 
A hypergraph whose hyperedges all have cardinality $\leq 2$ is just a
graph. In particular,
loops at a vertex of a graph are allowed (these are the hyperedges
corresponding to subsets of size $1$)
but multiple edges between
vertices are not (the hyperedges form a set, not a multi-set).
A matching of a hypergraph
$\Gamma$ is some set of pairwise disjoint hyperedges. 
The \emph{matching
complex\/} $M(\Gamma)$ has $q$-faces the matchings of $\Gamma$ that
contain $q+1$ hyperedges. 
Put another way, for $q\geq 0$, the matching complex $M(\Gamma)$ 
has $q$-faces those collections of $q+1$ mutually disjoint 
non-empty subsets of
the vertices $V$ of $\Gamma$ that happen to be hyperedges;
in particular, the vertices of $M(\Gamma)$ are
the non-empty subsets of $V$ that are hyperedges.

\vspace{1em}

The fundamental example is when
$\Gamma$ is the complete graph $K_n$ on $n$ vertices --- where any
two vertices are joined by an edge and no loops are allowed. Then
$M(\Gamma)=M(K_n)$ is \emph{the\/} matching complex
$M_n$. Alternatively, $M_n$
is the simplicial complex whose vertices are the $2$-element subsets
of $[n]:=\{1,2,\ldots,n\}$ and whose 
$q$-faces are the collections of $q+1$ mutually disjoint $2$-element
subsets of $[n]$. 
If instead $\Gamma$ is the complete bipartite graph $K_{m,n}$
then $M(\Gamma)$ is isomorphic to the \emph{chessboard\/} complex,
whose $q$-faces 
can be identified with 
the placements of $q+1$ mutually non-attacking rooks on an
$m\times n$ chessboard.

For an example where $\Gamma$ is a genuine
hypergraph, rather than just a graph, consider
for a fixed $r\geq 2$ the 
$r$-uniform complete 
hypergraph
$\Gamma=K_n^r$, whose hyperedges are all the subsets 
of $[n]$ of size $r$. The
resulting matching complex $M(\Gamma)$ then 
has $q$-faces the $q+1$ mutually disjoint subsets of $[n]$, all of
which have size $r$ --- see \cite{MR1253009}.
When $r=2$ the complex $M(K_n^2)$ is just the
matching complex $M_n$.

\vspace{1em}

Our interest starts with the matching complex $\overline{X}(n):=M(\Gamma)$
where 
$\Gamma$ is 
the complete hypergraph on $n\geq 2$ vertices: 
the hyperedges of $\Gamma$ are now
\emph{all\/} the
non-empty subsets of $[n]$.
The vertices of $\overline{X}(n)$ are the non-empty subsets
$x\subseteq [n]$ and
a $q$-face is a partition $x_0|x_1|\cdots|x_q$ of some subset
$A\subseteq [n]$ into $q+1$ non-empty blocks.
The complex $\ov{X}(n)$ is $(n-1)$-dimensional with a unique $(n-1)$-face
given by the partition of $[n]$ into $n$ blocks of size $1$. 

The complex $\overline{X}(n)$ has two connected components: one
consists of just the vertex
$[n]$, which is contained in no other face. On the other hand,
if $x$ is any other vertex then there is a path of at
most two edges connecting $x$ to the vertex $\{1\}$. For, if $1\not
\in x$ then the partition $1|x$ is an edge connecting $x$ to $\{1\}$; if $1\in
x$ then there is an $i\not\in x$ and hence edges $1|i$ and $i|x$ connecting
$x$ to $\{i\}$ and then $\{i\}$ to $\{1\}$. For this reason, we remove
the isolated vertex $[n]$ 
and write $X(n)$ for the complex whose vertices are the \emph{proper\/}
non-empty subsets $\varnothing\not=x\subsetneq [n]$ and whose
$q$-faces are 
the partitions of some subset $A\subseteq [n]$ into $q+1$ proper non-empty
blocks. 

The facets of $X(n)$ are the partitions $x_0|x_1|\cdots|x_q$ of $[n]$ itself.
In particular, the number of $q$-facets equals the number of partitions
of the set $[n]$ into $q+1$ non-empty blocks; this is given by the Stirling
number of the second kind:
$$
\stirling{n}{q}=
\frac{1}{q!}\sum_{i=0}^{q}(-1)^i\binom{q}{i}(q-i)^n.
$$
Since these numbers are nonzero for multiple values of $q$,
the complex $X(n)$ is not pure. 


\subsection{Homology}
\label{subsection:homology}

Returning to a general simplicial complex $X$, we
fix a total ordering $x<y$ of the vertices. A $q$-face is then oriented
when it is written as 
$\ss=x_0x_1\ldots x_q$ with $x_i<x_j$ iff $i<j$.
The reduced simplicial 
homology $\widetilde{H}_*(X,\Z)$
is the
homology of the chain complex $C_*=C_*(X,\Z)$ having $q$-chains
$C_q$ the free $\Z$-module on the $q$-faces $X_q$. The 
differential $d:C_q\ra C_{q-1}$ is the map induced by
$\ss
\mapsto
\sum_i (-1)^i d_i\ss$, where
$d_i\ss=x_0x_1\ldots \widehat{x}_i\ldots x_q\,(0\leq i\leq q)$,
and $d_0:C_0\ra C_{-1}= \Z$ 
is induced by $x_0\mapsto 1$.

If a group $G$ acts on $X$ and $\ss=x_0x_1\ldots x_q$ is an oriented 
face then: 
$$
g\cdot\ss=g\cdot x_0x_1\ldots x_q=(gx_0)(gx_1)\ldots (gx_q)
=y_{\pi(0)}y_{\pi(1)}\ldots y_{\pi(q)},
$$
where $y_0y_1\ldots y_q$ is an oriented face and $\pi$ is in the 
symmetric group $S_{\kern-1pt q}$. Define an action of $G$ on the $q$-chains
$C_q$ induced by:
\begin{equation}
\label{eq100}
    g\cdot x_0x_1\ldots x_q=(-1)^{\text{sgn}(\pi)} y_0y_1\ldots y_q.
\end{equation}
Then $gd=dg$ and (\ref{eq100}) defines an action of $G$ by chain maps 
on the complex $C_*$. In particular 
there is an induced action of $G$ on the
homology $\widetilde{H}_q(X,\Z)$ for all $q$. 

We compute homology by finding a
\emph{shelling\/} of $X$. This is a 
total ordering $\sigma_1,\sigma_2,\ldots,\sigma_m$ of the
facets such that for all $t>1$:
\begin{equation}
\label{eq5}
    \overline{\sigma}_t
\cap
{\textstyle \bigcup}_{i=1}^{t-1}\,\overline{\sigma}_i,
\end{equation}
is a pure $(\dim \sigma_t-1)$-dimensional subcomplex of 
the simplex $\overline{\sigma}_t$. In this case, define:
\begin{equation}
\label{eq6}
\mathscr{R}(\sigma_t)
=\{x\in{\sigma}_t:
{\sigma}_t\,\setminus\, x\in 
{\textstyle\bigcup}_{i=1}^{t-1}\overline{\sigma}_i\},    
\end{equation}
and call $\ss$ a homology facet when 
$\mathscr{R}(\sigma)=\sigma$. 

The homology
$\widetilde{H}_q(X,\Z)$ is isomorphic to the free $\Z$-module with
a basis in 1-1 correspondence with 
the homology facets
--- see \cite{MR1333388}*{Corollary 4.4}.
If $G$ acts on $X$
by preserving the set of homology facets
then $G$ permutes this basis exactly as it permutes the homology facets
themselves. 
This allows the
$G$-module 
structure of $\widetilde{H}_q(X,\Z)$ to be analysed.

\vspace{1em}

Returning to the matching complex $X(n)$ of Section
\ref{subsection:matching}, 
from now on we abbreviate a
partition $x_0|x_1|\cdots|x_q$ of $[n]$ to just $x_0x_1\ldots x_q$. It
will also be convenient to isolate those blocks of a partition that
are singletons and write $x_0x_1\ldots x_q$ as $x_0\ldots x_kz_1\ldots
z_\ell$, where the blocks $x_i$ have size $\geq 2$ and the blocks
$z_j$ have size $1$.

Totally order the facets of $X(n)$ in decreasing order of the number
of singleton blocks: i.e. define:
$$
x_0\ldots x_k z_1\ldots z_\ell
<
x'_0\ldots x'_u z'_1\ldots z'_v,
$$
if $\ell>v$; if $\ell=v$, then totally order the partitions with 
$\ell$ singleton blocks arbitrarily.

\begin{proposition}
  \label{prop1}
This order is a shelling of $X(n)$.
\end{proposition}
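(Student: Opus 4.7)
The plan is to verify the shelling condition in the standard non-pure form of \cite{MR1333388}: for each $t>1$, every face of $\overline\sigma_t$ that lies in some earlier $\overline\sigma_i$ must be contained in a codimension-$1$ face of $\overline\sigma_t$ which is itself contained in some earlier $\overline\sigma_i$. Writing $\sigma_t = x_0\ldots x_k z_1\ldots z_\ell$ with the $x_i$ of size $\geq 2$ and the $z_j$ of size $1$, the simplex $\overline\sigma_t$ is spanned by its $k+\ell+1$ blocks, so a face of $\overline\sigma_t$ is a sub-partition obtained by discarding some blocks and a codimension-$1$ face is obtained by discarding exactly one.

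The first step is to classify which codimension-$1$ faces of $\overline\sigma_t$ lie in an earlier facet. Discarding a singleton $z_j$ cannot produce one: the only way to extend $\sigma_t\setminus\{z_j\}$ to a facet of $X(n)$ is to re-insert $z_j$, recovering $\sigma_t$ itself. Discarding a non-singleton $x_j$ does: splitting $x_j$ into its $|x_j|\geq 2$ singleton parts completes the remaining blocks to a facet $\sigma'$ with $\ell+|x_j|>\ell$ singletons, so $\sigma'<\sigma_t$, and the codimension-$1$ face $\sigma_t\setminus\{x_j\}$ already lies in $\overline{\sigma'}$.

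The main step is the converse: for any $\tau\in\overline\sigma_t\cap\overline\sigma_i$ with $i<t$, I must show $\tau\subseteq\sigma_t\setminus\{x_j\}$ for some non-singleton $x_j$. Since the blocks of $\tau$ are common to $\sigma_t$ and $\sigma_i$, it suffices to exhibit some $x_j$ that is not a block of $\sigma_i$; the main (minor) obstacle is ruling out the possibility that $\sigma_i$ contains all of $x_0,\ldots,x_k$ as blocks. I would handle this by a short counting argument: if every $x_j$ is a block of $\sigma_i$, then the remaining blocks of $\sigma_i$ partition the complement $[n]\setminus(x_0\cup\cdots\cup x_k)$, a set of size exactly $\ell$. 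Hence $\sigma_i$ has at most $\ell$ singletons, with equality forcing the remaining blocks to be the $z_1,\ldots,z_\ell$, i.e.\ $\sigma_i=\sigma_t$. Since $i<t$ guarantees $\sigma_i$ has at least $\ell$ singletons, this simultaneously handles both the ``strictly more singletons'' and the ``same number of singletons'' sub-cases of the ordering, and produces the required $x_j$.

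Combining the two directions shows that the intersection (\ref{eq5}) equals the union of the codimension-$1$ faces $\sigma_t\setminus\{x_j\}$ as $x_j$ ranges over the non-singleton blocks of $\sigma_t$, and is therefore a pure subcomplex of $\overline\sigma_t$ of dimension $\dim\sigma_t-1 = k+\ell-1$. The one facet with no non-singleton block is the partition of $[n]$ into $n$ singletons, which carries the maximal number of singletons and so must appear as $\sigma_1$, making the case $t=1$ vacuous.
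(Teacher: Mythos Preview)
Your proof is correct and follows essentially the same approach as the paper: both identify the intersection \eqref{eq5} as the union of the codimension-$1$ faces $\sigma_t\setminus\{x_j\}$ obtained by deleting a non-singleton block, using the ``split $x_j$ into singletons'' construction to place each such face in an earlier facet, and a counting argument on the number of singletons for the converse. The only differences are cosmetic --- the paper argues the converse in contrapositive form (a face of $\sigma_t$ retaining all the $x_i$ lies in no earlier facet), while you argue it directly and are a bit more explicit about the equal-singleton-count case and about why $\sigma_1$ must be the all-singletons partition.
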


\begin{proof}
Let $\ss_t=x_0\ldots x_k z_1\ldots z_\ell$ be a facet. We claim that
the subcomplex (\ref{eq5}) is the union of all the simplices
$\overline{\tau}$ where:
\begin{equation}
  \label{eq:2}
\tau=x_0\ldots \widehat{x}_i \ldots x_k z_1\ldots z_\ell,
\end{equation}
with $0\leq i\leq k$ and the hat denoting omission. As each such
$\overline{\tau}$ is a $(\dim \sigma_t-1)$-dimensional sub-simplex of
$\overline{\sigma}$, the proposition 
follows. To see the claim, if in (\ref{eq:2}) the block
$x_i=\{x_{i1},\ldots,x_{ij}\}$, then $\tau$ is a face of the facet:
$$
\ss_s=x_0\ldots \widehat{x}_i \ldots x_k z_1\ldots z_\ell x_{i1}\ldots
x_{ij},
$$
and $\ss_s<\ss_t$. Thus $\ov{\tau}\subseteq\ov{\ss}_s\in
  \bigcup_{i=1}^{t-1}\,\overline{\sigma}_i$ and $\ov{\tau}$ is indeed
  contained in the subcomplex (\ref{eq5}).
On the other hand, let $\omega$ be a face of $\ss_t$ of the form:
$$
\omega=x_0\ldots x_k \widehat{z}_1\ldots\widehat{z}_jz_{j+1}\ldots z_\ell.
$$
If $\ss_s$ is a facet that contains $\omega$ as a face then
$\ss_s=
x_0\ldots x_k y_1\ldots y_r z_{j+1}\ldots z_\ell$
where $y_1\ldots y_r$ is some partition of the set
$\{z_1,\ldots,z_j\}$. In particular, $\ss_s\geq \ss_t$ and
$\ov{\omega}\not\in \bigcup_{i=1}^{t-1}\,\overline{\sigma}_i$. This
proves the claim. 
  \qed
\end{proof}

An immediate consequence of the claim at the beginning of the proof of
Proposition \ref{prop1} is that the homology facets of $X(n)$ are
those facets which as 
partitions have no singleton blocks. We have thus proved:

\begin{theorem}
  \label{thm:homology}
For $n\geq 2$ and $0\leq q\leq n-1$, the homology
$\widetilde{H}_q(X(n),\Z)\cong\Z^{\,\beta(n,q)}$ where 
$\beta(n,q)$ is the number of partitions of $[n]$ into $q+1$
blocks, with each block a proper non-singleton subset.
\end{theorem}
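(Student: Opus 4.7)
The plan is to extract Theorem~\ref{thm:homology} directly from Proposition~\ref{prop1} by identifying the homology facets of the shelling and then invoking the Björner--Wachs formula from \cite{MR1333388}*{Corollary 4.4} quoted at the end of Section~\ref{subsection:homology}. The useful observation is that the proof of Proposition~\ref{prop1} in fact computes more than what is claimed: for a facet $\sigma_t = x_0\ldots x_k z_1\ldots z_\ell$, the subcomplex \eqref{eq5} is exactly the union of the codimension-one subsimplices $\overline{\tau}$ obtained by omitting one of the non-singleton blocks $x_i$, and no face obtained by omitting a singleton block $z_j$ occurs.

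Next I would unwind the definition of $\mathscr{R}(\sigma_t)$ in \eqref{eq6} against this description. A vertex $v$ of $\sigma_t$ lies in $\mathscr{R}(\sigma_t)$ exactly when $\sigma_t\setminus v$ belongs to $\bigcup_{i<t}\overline{\sigma}_i$; by the observation above, this happens iff $v$ is one of the non-singleton blocks $x_i$. Consequently $\mathscr{R}(\sigma_t) = \sigma_t$ iff $\ell=0$, i.e.\ $\sigma_t$ has no singleton blocks at all. Hence the homology facets of $X(n)$ are precisely the partitions of $[n]$ into proper blocks of size $\geq 2$.

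To conclude, I would appeal to \cite{MR1333388}*{Corollary 4.4}, which asserts that $\widetilde{H}_q(X(n),\Z)$ is a free $\Z$-module with basis in bijection with the $q$-dimensional homology facets. Since a facet consisting of $q+1$ blocks has dimension $q$, the $q$-dimensional homology facets are exactly the partitions of $[n]$ into $q+1$ non-singleton proper blocks, and the count of these is $\beta(n,q)$ by definition. This gives the stated isomorphism $\widetilde{H}_q(X(n),\Z)\cong\Z^{\beta(n,q)}$. There is no substantive obstacle here, as the real work has already been carried out in constructing and analysing the shelling; the only bookkeeping is to keep the dimension index aligned with the number of blocks, and to note that $\beta(n,q)=0$ handles automatically the small cases (e.g.\ $q>\lfloor n/2\rfloor-1$) where no such partition exists.
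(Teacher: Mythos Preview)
Your proposal is correct and is essentially the paper's own argument: extract from the claim inside the proof of Proposition~\ref{prop1} that $\mathscr{R}(\sigma_t)$ consists precisely of the non-singleton blocks of $\sigma_t$, conclude that the homology facets are the partitions of $[n]$ with no singleton blocks, and apply \cite{MR1333388}*{Corollary 4.4}. The paper states this in one sentence immediately before the theorem, while you have spelled out the bookkeeping more explicitly, but the content is identical.
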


In particular if $q=0$ or $q>\lfloor\frac{n}{2}\rfloor-1$, 
where $\lfloor x\rfloor$ is the largest integer $\leq x$, 
then the homology $\widetilde{H}_q(X(n),\Z)$ vanishes.

For an integer $r\geq 1$, the $r$-associated Stirling number
$\stirling{n}{k}_r$ of the
second kind is the number of partitions of $[n]$ into $k$
blocks, each of which has size at least $r$ --- see
\cite{MR0460128}*{page 221}. There are various closed formulas for
$\stirling{n}{k}_r$ --- see \cite{MR0600368} --- and in particular,
inclusion-exclusion gives, for $q>0$:
$$
\beta(n,q)=\stirling{n}{q+1}_2=
\sum_{i=0}^{q+1}(-1)^i\binom{n}{i}\stirling{n-i}{q-i+1}.
$$


\section{Representations of the symmetric group}
\label{section:representations}

%
%
%

We saw in Section \ref{subsection:homology} that we can make the 
homology groups for $X(n)$ into $\Z\symn$-modules: by defining a total 
order on the vertices we can orient each face of $X(n)$, and then by 
adjusting the natural action with some sign changes we can ensure that 
orientations are preserved, so that $\symn$ acts on the homology
groups.
For the fundamentals of symmetric group representations, see
\cite{MR1824028}.

A convenient total order of the vertices of $X(n)$ is to first order 
subsets of $[n]$ in decreasing order of size, and then lexicographically 
order the subsets of size $k$ for each $k\leq n$. 
Thus, any oriented $q$-face $\ss=x_0x_1\ldots x_q \in X(n)$ has 
$|x_0|\geq \cdots \geq |x_q|$.
We denote the corresponding orientation-preserving action by 
$(g,\sigma) \mapsto g(\sigma)$ for $g\in \symn$ and $\sigma\in X(n)$.

Each facet $\sigma\in X(n)$ gives rise to a partition of the 
number $n$: define $\lambda(\sigma):=(|x_0|,\ldots,|x_q|)\partition n$.
We call the partition $\lambda=\lambda(\sigma)$ the \emph{shape} of
$\sigma$. 
For each $\lambda=(\lambda_0,\lambda_1,\ldots,\lambda_q)$, and for 
the rest of
this section,
let $x_0x_1\ldots x_q$ with:
\begin{equation}\label{min_face}
    x_0=\{1,\ldots,\lambda_0\},
x_1=\{\lambda_0+1,\ldots,\lambda_0+\lambda_1\},
\ldots,
x_q=\biggl\{1+\sum_{i=0}^{q-1}\lambda_i,
\ldots,\sum_{i=0}^{q}\lambda_i\biggr\},
\end{equation}
be the minimal (in the total order above) 
  oriented $q$-face of 
shape $\lambda$.

Given a partition 
$\lambda = (\lambda_0,\lambda_1,\ldots,\lambda_{q})\partition n$,
we can form the standard Young subgroup: 
$$
\Sym{\lambda}
\cong
\Sym{\lambda_{0}}
\times \cdots\times
\Sym{\lambda_{q}},
$$
where the factor  
$\Sym{\lambda_i}$
is identified as the permutations of the set 
$x_i$ in the minimal $q$-face (\ref{min_face}).
Let $N_\lambda = N_{\symn}(\Sym{\lambda})$ be the normaliser 
in $\symn$ of $\Sym{\lambda}$. 
If we rewrite $\lambda = \left(\mu_1^{a_1},\ldots,\mu_r^{a_r}\right)$ 
by collecting together all the parts of $\lambda$ with the 
same size, then there is a homomorphism:
$$
N_\lambda \ra N_\lambda/\Sym{\lambda} \cong \Sym{a_1}\times \cdots \times \Sym{a_r},
$$
with $g\mapsto \bar{g}$ for $g\in N_\lambda$. In particular, if 
$x_0x_1\ldots x_q$ is the face (\ref{min_face})
then
$g\cdot x_0x_1\ldots x_q=x_{\bar{g} 0}x_{\bar{g} 1}\ldots x_{\bar{g}q}$.
(In fact, we can identify $N_\lambda$ explicitly inside $\symn$ as a 
direct product of wreath products: for each $1\leq i\leq r$, we 
have a factor with base group the direct product of the $a_i$ copies
of $\Sym{\lambda_i}$ in $\Sym{\lambda}$ and a symmetric group $\Sym{a_i}$
on top permuting those copies.)

With this notation in hand, we can identify the homology groups as 
representations of $\Sym{n}$.
First identify $\widetilde{H}_q(X(n),\Z)$ with 
the free $\Z$-module spanned by the homology facets, where 
a homology facet $\sigma$ 
has partition 
$\lambda(\sigma)\partition n$ with no parts of size $1$. 
For each such partition 
it is clear that the free $\Z$-module spanned by the facets of 
a fixed shape $\lambda$ identifies with a $\Z\symn$-submodule of 
$\widetilde{H}_q(X(n),\Z)$. 
Denote this submodule by $V(\lambda)$.

Let $V=\Z[v]$ be a $1$-dimensional $\Z$-module and define an
$N_\lambda$ action on $V$ by $g\cdot v=\text{sgn}(\bar{g})\, v$.
We then have:

\begin{theorem}
  \label{thm:homologyinduced}
For each partition $\lambda\partition n$
there is an isomorphism of 
$\Z\symn$-modules:
$$
V(\lambda) \cong
V\kern1pt{\uparrow^{\kern1pt\Sym{n}}_{\kern1ptN_\lambda}},
$$
where the right-hand side is the $N_\lambda$ module $V$ induced 
up to $\Sym{n}$,
and hence
$$
\widetilde{H}_q(X(n),\Z) \cong \bigoplus_\lambda 
V\kern1pt{\uparrow^{\kern1pt\Sym{n}}_{\kern1ptN_\lambda}},
$$
as $\Z\symn$-modules,
where the sum is over all $\lambda$ of length $q+1$ and with no parts
of size $1$ or $n$.
\end{theorem}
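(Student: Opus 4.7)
The plan is to exhibit $V(\lambda)$ as an induced representation by the standard orbit--stabiliser recipe: fix a basepoint facet, identify its stabiliser, compute the sign by which that stabiliser twists the orientation, and then match with the induced module via a coset transversal.

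First I would observe that $\Sym{n}$ acts transitively on the set of (unoriented) facets of shape $\lambda$, since any two set partitions of $[n]$ with the same multiset of block sizes are related by a permutation in $\Sym{n}$. I take the minimal oriented facet $\sigma_\lambda = x_0 x_1 \ldots x_q$ of (\ref{min_face}) as a basepoint. Its setwise stabiliser in $\Sym{n}$ (ignoring orientation) is exactly $N_\lambda$: elements of $\Sym{\lambda}$ stabilise each block $x_i$ individually, while a transversal of $\Sym{\lambda}$ in $N_\lambda$ permutes blocks of equal size among themselves, and no permutation outside $N_\lambda$ preserves the unordered set partition underlying $\sigma_\lambda$.

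Next comes the sign computation, which I expect to be the main technical point. For $g\in N_\lambda$ the description just before the theorem gives $g\cdot x_0 x_1 \ldots x_q = x_{\bar{g}(0)} x_{\bar{g}(1)} \ldots x_{\bar{g}(q)}$, and because $\bar{g} \in \Sym{a_1} \times \cdots \times \Sym{a_r}$ only permutes indices within each class of equal-sized blocks, the reordering needed to return to the canonical orientation (decreasing size, then lex within each size class) is precisely $\bar{g}^{-1}$, whose sign equals $\sgn(\bar{g})$. By (\ref{eq100}) this yields $g\cdot \sigma_\lambda = \sgn(\bar{g})\,\sigma_\lambda$ inside $V(\lambda)$, matching exactly the $N_\lambda$-action on $V = \Z[v]$.

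With the sign in hand I would define $\Phi\colon V\!\uparrow^{\Sym{n}}_{N_\lambda} \to V(\lambda)$ by $g\otimes v \mapsto g\cdot \sigma_\lambda$. Well-definedness on cosets uses the previous paragraph: for $h\in N_\lambda$, both $gh\otimes v = \sgn(\bar{h})\,(g\otimes v)$ and $gh\cdot \sigma_\lambda = \sgn(\bar{h})\, g\cdot \sigma_\lambda$ pick up the same sign. Equivariance is immediate from the construction. Bijectivity follows from orbit--stabiliser: as $g$ runs over a transversal of $N_\lambda$ in $\Sym{n}$, the elements $g\otimes v$ form a $\Z$-basis of the induced module, while the images $g\cdot\sigma_\lambda$ run (up to sign) over the distinct oriented facets of shape $\lambda$, which form a $\Z$-basis of $V(\lambda)$. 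The decomposition of $\widetilde{H}_q(X(n),\Z)$ then follows by sorting the basis of Theorem \ref{thm:homology} by shape: the homology facets of dimension $q$ correspond precisely to partitions of $[n]$ into $q+1$ blocks each of size strictly between $1$ and $n$, which index exactly the $\lambda$ appearing in the claimed sum.
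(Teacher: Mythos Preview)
Your proof is correct and follows essentially the same route as the paper's: fix the minimal facet $\sigma_\lambda$, identify its stabiliser as $N_\lambda$ with the sign twist $g\cdot\sigma_\lambda=\sgn(\bar g)\,\sigma_\lambda$, and match bases via a coset transversal. The paper writes the isomorphism in the other direction ($t\cdot\sigma_\lambda\mapsto t\otimes v$) and chooses the transversal so that each $t_i\cdot\sigma_\lambda$ is already in canonical orientation, whereas you allow a sign and absorb it into the basis; these are cosmetic differences only.
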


\begin{proof}
  Let $T=\{t_0=1,t_1,\ldots, t_m\}\in\Sym{n}$ be such that the
  $t_i\cdot x_0x_1\ldots x_q,\, (0\leq i\leq m),$ are the oriented
  $q$-faces of shape $\lambda$, where $x_0x_1\ldots x_q$ is the minimal
  oriented $q$-face from (\ref{min_face}). In particular,
  $T$ is a transversal
  for $N_\lambda$ in $\Sym{n}$ and the induced module
$V\kern1pt{\uparrow^{\kern1pt\Sym{n}}_{\kern1ptN_\lambda}}=\Z[t\otimes
v]_{t\in T}$. 
If
$h\in\Sym{n}$,
then for any $t_i\in T$ we have $ht_i=t_jg$ for
some $t_j\in T,g\in N_\lambda$. 
The induced $\Sym{n}$-action is then given by
$h\cdot(t_i\otimes v)=t_j\otimes g\cdot v=t_j\otimes
\text{sgn}(\bar{g}) v$. On the other hand $V(\lambda)=
\Z[t\cdot x_0x_1\ldots x_q]_{t\in T}$,
and the action
(\ref{eq100}) is:
$$
h\cdot(t_i\cdot x_0x_1\ldots x_q)=t_jg\cdot x_0x_1\ldots x_q
=t_j\cdot x_{\bar{g} 0}x_{\bar{g} 1}\ldots x_{\bar{g}q}
=\text{sgn}(\bar{g})\, t_j\cdot x_0x_1\ldots x_q.
$$
The map $t\cdot x_0x_1\ldots x_q\mapsto t\otimes v$ is then the
$\Z\symn$-isomorphism we seek.
\qed
\end{proof}

\begin{remark}\label{rem:kostka}
Suppose that $\lambda$ is a partition with no repeated parts.
Then $N_\lambda = \Sym{\lambda}$ and  $V$ is 
the trivial module.
The induced module
is thus
the permutation module $M^\lambda$ 
corresponding to the partition $\lambda$.

If we base change to a field of characteristic $0$, then the 
decomposition of this permutation module into simple 
modules for $\symn$ is well-known: 
\begin{equation}\label{eq:kostka}
V\kern1pt{\uparrow^{\kern1pt\Sym{n}}_{\kern1ptN_\lambda}}\cong
M^\lambda\cong\bigoplus_{\mu\,\unrhd\lambda} \kappa_{\mu\lambda}\Sp(\mu),
\end{equation}
where $\Sp(\mu)$ is the Specht module corresponding to
the partition $\mu\partition n$ and the sum is over all $\mu$
that dominate $\lambda$. The coefficient $\kappa_{\mu\lambda}$ is the
number of  
semistandard tableaux of shape $\mu$ and content $\lambda$ (a Kostka
number ---  
see for example \cite{MR1824028}*{\S 2.11}).

If we instead base change to an algebraically closed field of positive 
characteristic, then there is an analogous decomposition of 
the permutation module into a direct sum of Young modules,
see \cite{donkin}*{Lemmas (3.4) and (3.5)}.
\end{remark}

\begin{remark}
Building on the previous remark, in \emph{all} cases we can 
identify the induced module
$V\kern1pt{\uparrow^{\kern1pt\Sym{n}}_{\kern1ptN_\lambda}}$ 
as a submodule of the permutation module $M^\lambda$, as follows.

The module $M^\lambda$ has a basis in bijection with tabloids of 
shape $\lambda$. 
Consider the (standard) tabloid $T$ of shape $\lambda$ filled 
left-to-right, top-to-bottom with the numbers $1,\ldots,n$.
The group $\Sym{\lambda}$ fixes this tabloid, and the group 
$N_\lambda$ acts as permutations of the rows.
Let:
$$
S = \sum_{g\in N_\lambda} \sgn(\bar{g}) (g\cdot T).
$$
Let $V^\lambda$ denote the cyclic $\Z\symn$-submodule of $M^\lambda$ 
generated by $S$ --- that is, $V^\lambda$ is $\Z$-spanned by the 
elements $g\cdot S$ as $g$ runs over the elements of $\symn$.
We see easily that
$V^\lambda \cong V\kern1pt{\uparrow^{\kern1pt\Sym{n}}_{\kern1ptN_\lambda}}$.
(If $g_1,\ldots,g_r$ is a system of coset representatives for $N_\lambda$, 
then $V^\lambda$ is the $\Z$-span of the $g_i\cdot S$, and 
$\symn$ acts precisely as it does on the induced module.)
\end{remark}

\section{Representation stability of fibre-closed families}

Even over a field $k$ of characteristic $0$, a complete decomposition of the 
$k\symn$-module $\widetilde{H}_q(X(n),k)$
into irreducibles 
seems to be out of reach: to analyse the structure of the 
induced modules appearing becomes very hard away from cases
like Remark \ref{rem:kostka} above. For example, the discrete Hodge-theoretic
techniques used in
\cites{MR1912799,MR1648484}
to decompose into irreducibles
the homology
of the classical matching complexes $M_n$ do not carry over to the
complexes $X(n)$.

In lieu of such a decomposition we work in an asymptotic manner
and analyse the overall ``shape'' of the representations in
Theorem \ref{thm:homologyinduced} (or, more precisely, the analogue of Theorem 
\ref{thm:homologyinduced} over $\Q$). We start with two observations:
firstly, the Betti numbers $\beta(n,q)$, for any fixed integer
$q \geq 0$, of $X(n)$ can be expressed
as a $\Q[n]$-linear combination of exponential terms.
Secondly, the Specht modules $\Sp(\lambda)$
appearing in a decomposition of
$\widetilde{H}_q(X_n;\Q)$ have the property that the length 
of $\lambda$ is at most
$q+1$. It turns out that these two
observations, along with a few others that are less
immediately deduced from Theorem \ref{thm:homologyinduced},
remain true across a more general collection of simplicial complexes
than just the $X(n)$ of Sections
\ref{section:complexes}-\ref{section:representations}.

\subsection{Fibre-closed families of simplicial complexes}

Let $\{X_n\}_{n \in \N}$ denote a family of simplicial complexes
such that the vertices of $X_n$ consist of
certain subsets of $[n]=\{1,2,\ldots,n\}$. 
We call such a 
family \emph{fibre closed} when for every surjective function
$f:[a] \rightarrow [b]$, and every face
$\ss = \{x_0,\ldots,x_q\}$ of $X_b$, the preimage
$f^{-1}\ss = \{f^{-1}x_0,\ldots,f^{-1}x_q\}$ is a
face of $X_a$.
The condition of being fibre-closed
implies that each of the simplicial complexes $X_n$
carries an action by $\symn$.

The motivating examples for us are the matching complexes $X(n)$
of the complete hypergraphs on $n$ vertices from Sections
\ref{section:complexes}-\ref{section:representations}.
Other examples include the order complex of the lattice of subsets
of $[n]$ and the \emph{anti-order\/} complex (whose faces comprise the
anti-chains) of this lattice. 

If $m\leq n$ are non-negative integers and 
$\lambda = (\lambda_1,\ldots,\lambda_r)\partition m$ 
is a partition of $m$, write $\lambda[n]$ for the padded partition
$\lambda[n] := (n-m,\lambda_1,\ldots,\lambda_r)$.

For any $d \geq 0$, we define
$\mathbf{X}_d:\bigcup_{n \geq 0}\Sym{n}\rightarrow \Q$ (disjoint union) by setting
$\mathbf{X}_d(\sigma)$ to be the number of $d$-cycles in the cycle decomposition
of $\sigma$. A character polynomial is then defined to be any class
function that can be expressed as a polynomial in the variables $\mathbf{X}_d$.
In a similar vein, if $A = 1^{a_1}2^{a_2}\cdots$ and $\nu = 1^{m_1}2^{m_2}\cdots$
denote partitions of two (not necessarily distinct) integers,
then we can define a class function:
\[
\binom{\mathbf{X}}{\nu}A^{\mathbf{X}-\nu} := \prod_{d \geq 0}
\binom{\mathbf{X}_d}{m_d}\left(\sum_{n \mid d} na_n\right)^{\mathbf{X}_d-m_d}
\]
These particular character functions were introduced in
\cite{Tost1}*{Theorem 1.11}, and will be critical in the next theorem.

Here is the promised asymptotic result, where except for the last part,
we work over $\Q$:
 
\begin{theorem}
  \label{mainthm}
  Let $\{X_n\}_{n \in \N}$ be a fibre-closed family of simplicial
  complexes, and let $q \geq 0$ be fixed.
\begin{enumerate}[(i).]
\item There exist polynomials $f_1,\ldots,f_{2^{q+1}} \in \Q[x]$,
such that the $q$-th Betti number $\beta(n,q)$ of $X_n$ can be
expressed as:
        \[
        \beta(n,q) = f_1(n) + f_2(n)2^n + f_3(n)3^n + \ldots
        + f_{2^{q+1}}(n)(2^{q+1})^n,
        \]
for all $n \gg 0$.
\item The $\Q\symn$-module $\widetilde{H}_q(X_n,\Q)$ decomposes as a sum of
Specht modules $\Sp(\lambda)$ where the partition
$\lambda\partition n$ has length
bounded by $2^{q+1}$ for all $n \gg 0$.
\item 
For $n \gg 0$, the multiplicity of
$\Sp(\lambda[n])$ appearing
in $\widetilde{H}_q(X_n;\Q)$ grows as a quasi-polynomial in $n$.
\item The character of $\widetilde{H}_q(X_n,\Q)$ can be written:
\begin{align*}
  \chi_{\widetilde{H}_q(X_n,\Q)} =
  \sum_{A,\nu} c(\nu,A)\binom{\mathbf{X}}{\nu}A^{\mathbf{X}-\nu},
\end{align*}
where the constant $c(\nu,A) = 0$ whenever $|A| > 2^{q+1}$.
\item There exists an integer $e_{X,q}$,
depending only on the family $\{X_n\}_{n \in \N}$
and $q$, such that the torsion part of
the $\Z\symn$-module $\widetilde{H}_q(X_n,\Z)$
has exponent dividing $e_{X,q}$ for all $n\geq 0$.
\end{enumerate}
\end{theorem}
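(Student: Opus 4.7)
The plan is to organise $\{X_n\}$ into a single functorial object suitable for representation stability and then to deduce all five conclusions from the structure theory of finitely generated $\FSop$-modules. The fibre-closed hypothesis is precisely the data making $[n] \mapsto X_n$ into a contravariant functor from the category of finite sets and surjections to simplicial complexes, via $\sigma \mapsto f^{-1}\sigma$. Passing to reduced simplicial chains and taking $q$-th homology, for any commutative ring $k$, turns $n \mapsto \widetilde{H}_q(X_n,k)$ into an $\FSop$-module which I denote $M_q^k$.

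The key structural input I would then establish is that $M_q^k$ is generated in degrees at most $2^{q+1}$. Given a $q$-face $\sigma = \{x_0,\ldots,x_q\}$ of $X_n$, assign to each $j \in [n]$ its \emph{type} $\tau(j) := \{i : j \in x_i\} \subseteq \{0,\ldots,q\}$; elements of common type form the blocks of a partition of $[n]$ into some $b \leq 2^{q+1}$ blocks. Labelling these blocks yields a surjection $f : [n] \to [b]$ and a face $\sigma' \in X_b$ with $f^{-1}\sigma' = \sigma$, so every $q$-chain of $X_n$ --- hence every class in $M_q^k(n)$ --- is a transfer of a class in $M_q^k(b)$ for some $b \leq 2^{q+1}$.

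With finite generation in hand, parts (i)--(iv) follow by standard arguments over $\Q$. The representable $\FSop$-module $P_b$ has $P_b(n) = \Q[\mathrm{Surj}([n],[b])]$ of dimension $b!\stirling{n}{b}$, a $\Q[n]$-linear combination of the exponentials $1^n,\ldots,b^n$ by the Stirling formula from Section \ref{subsection:matching}; since $M_q^\Q$ is a quotient of a finite direct sum of $P_b$'s with $b \leq 2^{q+1}$, the exponential-sum formula of (i) follows. As a $\symn$-module, $P_b(n)$ is induced from a Young subgroup of length $b$ and therefore decomposes into Specht modules of length $\leq b$, giving (ii). The character polynomial decomposition (iv), with the vanishing $c(\nu,A) = 0$ for $|A| > 2^{q+1}$, is the content of \cite{Tost1}*{Theorem 1.11} applied with generation degree bound $2^{q+1}$, and the quasi-polynomial growth (iii) of the multiplicity of $\Sp(\lambda[n])$ is obtained by specialising this character polynomial formula to fixed $\lambda$.

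The main obstacle is (v), which requires integral information rather than just rational. The type-partition argument works chain-level over $\Z$, exhibiting $\widetilde{C}_q(X_\bullet,\Z)$ as a quotient of a finite direct sum of representables $P_b$, $b \leq 2^{q+1}$, so $M_q^\Z$ is finitely generated as a $\Z\FSop$-module. The delicate step is to upgrade this to a uniform bound on the exponent of the torsion in the subquotient $\widetilde{H}_q(X_n,\Z)$. My plan is to show that the torsion sub-$\Z\FSop$-module $T_q \subseteq M_q^\Z$ is itself finitely generated in degrees $\leq 2^{q+1}$ --- a Noetherianity-type statement for $\Z\FSop$-modules --- and then take $e_{X,q}$ to be the least common multiple of the (finitely many) exponents of the torsion in the generating groups $\widetilde{H}_q(X_b,\Z)$ for $b \leq 2^{q+1}$. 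Establishing this Noetherianity, as opposed to the purely representation-theoretic parts, is where the bulk of the work lies.
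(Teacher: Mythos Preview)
Your type-partition argument has a genuine gap. You construct a candidate $\sigma'=\{y_0,\ldots,y_q\}$ of subsets of $[b]$ with $f^{-1}\sigma'=\sigma$, and then assert $\sigma'\in X_b$; but fibre-closedness is a one-way condition --- it says that preimages of faces are faces, not that anything whose preimage happens to be a face was itself a face. Concretely, take a fibre-closed family with $X_b$ empty for all $b\leq N$ and $X_{N+1}$ nonempty (the paper builds exactly such a family from $M(K_7)$); then no face of $X_{N+1}$ is a pullback from any smaller $X_b$, regardless of how few type-blocks it has. So the chain $\FSop$-module $\widetilde{C}_q(X_\bullet)$ is \emph{not} in general generated in degrees $\leq 2^{q+1}$, and your deductions of (i)--(iv) as consequences of being a quotient of $\bigoplus_{b\leq 2^{q+1}} P_b$ do not get off the ground. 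Even if the argument had worked at chain level, the step ``hence every class in $M_q^k(n)$'' is too quick: homology is a \emph{subquotient} of chains, so one only gets $2^{q+1}$-smallness, not generation in degree $\leq 2^{q+1}$.

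The paper repairs this by working one level up. Let $M$ be the $\FSop$-module with $M_n$ the $R$-linearisation of the full power set of $[n]$; your type argument is correct for $M$ (every subset of $[n]$ is a pullback from $[2]$), so $M$ is generated in degree $\leq 2$ and $\bigwedge^{q+1}M$ in degree $\leq 2^{q+1}$ by \cite{PR-resonance}. The chain module $C_q(X_\bullet)$ sits inside $\bigwedge^{q+1}M$ as a \emph{submodule}; Sam--Snowden Noetherianity \cite{sam} then says it is finitely generated and, being a submodule of something generated in degree $\leq 2^{q+1}$, it is $2^{q+1}$-small. Homology is a further subquotient, hence also $2^{q+1}$-small, and the cited results (\cite{sam}*{8.1.4}, \cite{fs-braid}*{4.1}, \cite{Tost1}, \cite{Tost2}) all apply to $d$-small modules. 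For (v) your plan is exactly right --- pass to the torsion sub-$\FSop$-module over $\Z$ and take an LCM over generators --- but the Noetherianity you flag as ``the bulk of the work'' is again \cite{sam}*{Corollary 8.1.3} over a Noetherian base ring, so it is a citation, not something you need to reprove; and finite generation of the torsion submodule (with no control on the generation degree) is all that is needed.
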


\begin{remark}
  The expression $\sum_{A,\nu} c(\nu,A)\binom{\mathbf{X}}{\nu}A^{\mathbf{X}-\nu}$
  is a genuinely infinite sum, though at any given conjugacy class all
  but finitely many terms are 0. Note also that if one takes the
  conjugacy class of the identity of $\Sym{n}$,
  then:
  $$\mathbf{X}_d(id_n) = \begin{cases}
        n &\text{ if $d = 1$}\\
        0 &\text{ otherwise.}
    \end{cases}$$
The character formula therefore simplifies to a $\Q[n]$ linear
  combination of exponential terms, recovering the first part of our theorem.
\end{remark}

We defer the proof of Theorem \ref{mainthm} to the next section
and explore the ramifications of the theorem in some examples.

If $X_n=X(n)=M(\Gamma)$, the matching complex of the complete
hypergraph on $n\geq 2$ vertices, then parts (i) and (ii) of
Theorem \ref{mainthm} are the observations made in the preamble
to this section. Parts (iii) and (iv) are less obvious, although
an alternative argument illustrates them in the case of the trivial
representation, where Frobenius reciprocity gives:
$$
\text{Hom}_{\symn}(1_n,V\kern1pt{\uparrow^{\kern1pt\Sym{n}}_{\kern1ptN_\lambda}})
\cong
\text{Hom}_{N_\lambda}(1_n\kern1pt{\downarrow^{\kern1pt\Sym{n}}_{\kern1ptN_\lambda}},V),
$$
with $1_n$ the trivial $\symn$-representation. As
$1_n\kern1pt{\downarrow^{\kern1pt\Sym{n}}_{\kern1ptN_\lambda}}$ is trivial and $V$
is irreducible, the right-hand side has dimension $1$ iff $V$ is the trivial
module, and this in turn happens exactly when $\lambda$ has no repeated parts.
Thus, the multiplicity of the trivial representation in
$V\kern1pt{\uparrow^{\kern1pt\Sym{n}}_{\kern1ptN_\lambda}}$ is
the number of partitions $\lambda$ of $n$ with $q+1$
distinct parts,  none of which have size $1$ or $n$. This quantity is thus
a quasi-polynomial when $n \gg 0$. This fact can also be deduced from
classical generating function arguments on collections of partitions.

Finessing this example a little further, we can produce
examples where the Betti number growth of
the family $\{X_n\}_{n \in \N}$ hits a variety of different types within
the description
given by Theorem \ref{mainthm}(i). For example,
let $X_n$ be just the $1$-skeleton of the complete hypergraph
matching complex $X(n)$. As this graph is connected we have
$H_0\cong\Z$ and so an Euler characteristic calculation gives
$1-\text{rk}\, H_1=|X_0|-|X_1|$, recalling that $X_q$ is
the set of $q$-faces. Thus:
$$
\text{rk}\, H_1=
\frac{1}{2}3^n-2\cdot2^n+\frac{7}{2},
$$
as $|X_0|=2^n-2$ and $|X_1|=\frac{3^n - (2^n+2^n-1)}{2}$.
If we instead take $X_n$ to be the complete graph on the vertex
set of $X(n)$---i.e. take the same vertices as $X(n)$ but now join any
two by an edge---then the rank of $H_1$
will have a $4^n$ term, thus meeting the worst-case scenario of
Theorem \ref{mainthm}(i).

As one final example of the various growth rates, consider our running example of 
the matching complex of the complete hypergraph. In this case, our theorem on the 
homologies of these spaces tells us that the rank of $H_1$ is equal to the number of 
partitions of $[n]$ into two proper non-singleton blocks. In particular:
\[
\text{rk}\ H_1 = \frac{2^n-2-2n}{2} = 2^{n-1}-(n+1).
\]
This example illustrates the genuine need for the polynomial coefficients when 
describing the Betti numbers of matching complexes, and fibre-closed families more 
generally.

Finally, we illustrate that it is even possible
for a fibre-closed family to exhibit torsion in its homology groups.
We define a fibre-closed family $\{X_n\}_{n \in \N}$ as follows.
For $n \leq 7$ let $X_n$ be the empty complex and let $X_7$ be the
matching complex of the complete graph $K_7$. Thus the vertices of $X_7$
consist of all pairs $\{i,j\}$, and a collection of vertices
forms a face if and only if none of the pairs share an element.
For $n > 7$, define $X_n$ inductively to be the smallest complex
containing all possible preimages of faces of $X_{n-1}$ under
surjections $f:[n] \rightarrow [n-1]$. By \cites{MR1253009,SW} we have
$H_1(X_7) \cong \Z/3\Z.$  The final part of Theorem \ref{mainthm}
tells us that in any fibre closed family of complexes, and for any
fixed $q \geq  0$, the $q$-th homology group $H_q(X_n)$
has torsion that is bounded independently of $n$.

\subsection{Representation stability of fibre-closed families}

The proof of Theorem \ref{mainthm} appeals to categorical representation
theory, and in particular the theory of $\FSop$-modules,
as found in \cites{PR-resonance,fs-braid,sam,Tost1,Tost2}.

Write $\FS$ for the category whose objects are the sets
$[n]=\{1,2,\ldots,n\}$, for $n \geq 0$,
and whose morphisms are all possible surjective maps.
An $\FSop$ module over a ring $R$ is a functor:
$$
V:\FSop \rightarrow\rmod,
$$
to the category of (left) $R$-modules. In other parts
of the literature
$\FSop$ modules are known variously
as presheaves, combinatorial sheaves and quiver representations. 

We will abbreviate the value of the functor $V$ on the object $[n]$
to just $V_n$. As
the surjective maps from $[n]$ to $[n]$ are
precisely the permutations of $[n]$,
an $\FSop$-module can also be thought of as a
collection of $R\symn$-modules $\{V_n\}_{n \geq 0}$,
such that for any surjection $[a] \rightarrow [b]$, there is
an induced map of $R$-modules $V_b \rightarrow V_a$ that
is compatible with the symmetric group actions.

The key idea of this section is that the homology groups, in fixed homological 
degree, of any fibre-closed family of simplicial complexes carry the structure of an 
$\FSop$-module. Indeed, given any surjection of finite sets, the action of taking 
fibres gives one a means for mapping faces of one member of the family to faces of 
another, thereby inducing a continuous map between the geometric realization of the 
complexes. For instance, returning to our running example of the matching complex of 
the complete hypergraph, we can actually determine the precise $\FSop$-module 
structure. Note that the action of a given surjection is precisely the most natural 
extension of the action (\ref{eq100}). Namely, given a surjection $f:[m] 
\twoheadrightarrow [n]$ and a facet of $X_n$ realized as a partition of $[n]$ into 
proper non-singleton blocks, take the preimages of each individual block. The result 
will once again be a partition of $[m]$ into proper non-singleton blocks. Having 
decided on an ordering on the blocks, one then adjusts by a sign, if necessary.

The category whose objects are the
$\FSop$-modules and whose morphisms are natural transformations of
functors is well known to be Abelian.
We say that an $\FSop$-module
$V$ is finitely generated if there is a finite
collection of elements $\{v_1,\ldots,v_r\} \subseteq \bigcup_n V_n$
which no proper $\FSop$-submodule of $V$ contains. Equivalently,
$V$ is finitely generated iff each $V_n$ is finitely generated as
an $R$-module
and there exists an integer $d \geq 0$ such that
for all $n > d$, the module $V_n$ is generated by the images of the
module $V_{n-1}$ under the maps induced by surjections $[n] \rightarrow [n-1]$.
In this case, we say that $V$ is finitely generated
in degrees $\leq d$. More generally, we say that an $\FSop$-module
is $d$-small if it is a subquotient of an $\FSop$-module
that is generated in degrees $\leq d$. 

We now summarise the results on $\FSop$-modules that we will need.
Let $R$ denote a Noetherian ring, and let $V$ be a
finitely generated $\FSop$-module over $R$ that is $d$-small.
Then by
\cite{sam}*{Corollary 8.1.3} all submodules of
$V$ are finitely generated. If $R=k$ is a field then 
there exist polynomials $f_1,\ldots,f_d \in \Q[x]$
such that for all $n \gg 0$:
$$
\dim V_n = f_1(n) + f_2(n)2^n + \ldots + f_d(n)d^n,
$$
--- see \cite{sam}*{Corollary 8.1.4}.

If $k$ is a field of
characteristic 0, then \cite{fs-braid}*{Theorem 4.1} gives that
for $n \gg 0$, all Specht modules $\Sp(\lambda)$
appearing as summands of $V_n$ are
associated to partitions $\lambda$ of length $\leq d$.
Moreover, if $\lambda$ is a partition of
some integer $m$, then for $n \gg 0$,
the muliplicity in $V_n$ of $\Sp(\lambda[n])$
is a quasi-polynomial in $n$;
see \cite{Tost2}*{Theorem 1.17}.
Finally, by \cite{Tost2}*{Theorem 1.11}
the character of $V_n$ can be expressed in terms of
the character polynomials $\mathbf{X}_d$ as shown above.

Our final ingredient is the following:

\begin{proposition}
If $R = \Z$, then there exists an integer
$e_V \geq 0$ such that for any $n$ the exponent of
torsion appearing in $V_n$ divides $e_V$.
\end{proposition}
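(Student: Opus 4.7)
The plan is to identify the torsion part of $V$ as an $\FSop$-submodule, invoke the Noetherian theorem cited above for $\FSop$-modules over $\Z$ to deduce that it is finitely generated, and then bound the exponent by the least common multiple of the orders of a finite generating set.

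First I would define the submodule $V^{\mathrm{tors}} \subseteq V$ by letting $V^{\mathrm{tors}}_n$ be the $\Z$-torsion subgroup of $V_n$. To verify that this is an $\FSop$-submodule I just need to observe that for any surjection $\phi : [a] \to [b]$, the induced structure map $\phi^{\ast} : V_b \to V_a$ is a $\Z$-module homomorphism, hence sends torsion to torsion. In particular $\phi^{\ast}(V^{\mathrm{tors}}_b) \subseteq V^{\mathrm{tors}}_a$, so the torsion subgroups assemble into a genuine sub-$\FSop$-module of $V$.

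Next I would apply the Noetherian property: since $V$ is a finitely generated $d$-small $\FSop$-module over the Noetherian ring $\Z$, the result \cite{sam}*{Corollary 8.1.3} cited in the paper tells us that every submodule of $V$ is finitely generated. Applied to $V^{\mathrm{tors}}$ this furnishes a finite generating set $v_1, \ldots, v_r$ with $v_i \in V^{\mathrm{tors}}_{n_i}$, each having some finite order $e_i$ as a $\Z$-torsion element. I would then set $e_V := \mathrm{lcm}(e_1, \ldots, e_r)$.

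Finally, the conclusion: by definition of generation for $\FSop$-modules, every element $w \in V^{\mathrm{tors}}_n$ can be written as a $\Z$-linear combination of elements of the form $\phi^{\ast}(v_i)$ for various $\FSop$-morphisms $\phi$ with appropriate target $[n]$. Each such $\phi^{\ast}(v_i)$ is annihilated by $e_i$ and hence by $e_V$, and any $\Z$-linear combination of elements annihilated by $e_V$ is again annihilated by $e_V$, so $e_V$ kills every element of $V^{\mathrm{tors}}_n$ uniformly in $n$. There isn't really a single hard step in this argument — the serious work is hidden inside the Noetherian result that is simply quoted — so the main thing I would want to double-check is that the version of Sam's theorem being invoked truly applies over $R = \Z$ and not merely over a field, which is what the paper's formulation explicitly asserts.
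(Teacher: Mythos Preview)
Your proof is correct and follows essentially the same route as the paper: define the torsion submodule, use Noetherianity of $\FSop$-modules over $\Z$ to get a finite generating set, and take $e_V$ to be the lcm of the orders of the generators. Your write-up is in fact slightly more explicit than the paper's in verifying that torsion is preserved by the structure maps and in spelling out why the lcm annihilates every element; your closing caveat is also well-placed, and the paper's formulation of \cite{sam}*{Corollary 8.1.3} does explicitly allow $R$ to be any Noetherian ring.
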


\begin{proof}
Let $V$ be a finitely generated $\FSop$-module over $\Z$.
We observe that if $v \in V_n$ is any torsion element,
then the images of this element under any
map induced by surjections are also torsion.
It follows that we may define a
new $\FSop$-module by setting $W_n$ to be the collection of
all torsion elements of $V_n$ for each $n$.
Because $W$ is a submodule of $V$ by construction,
it must be finitely generated by the first result quoted above. 
The integer $d_V$ can then be taken to be
the least common multiple of the orders of the (finitely many)
generators of $W$.\qed
\end{proof}

We are now ready to proceed with the:

\begin{proof}[of Theorem \ref{mainthm}]
Let $\{X_n\}_{n \geq 0}$
be a fibre closed family of simplicial complexes.
For any $a > b \geq 0$, and any surjection $f:[a] \rightarrow [b]$,
we have a map $(X_b)_0 \rightarrow (X_a)_0$ by sending
a subset 
$x\subseteq [b]$ to its preimage under $f$.
The definition of fibre-closed tells us that this map lifts
to a map between the $q$-faces of $X_a$ and $X_b$ for all $q$.
In particular, the surjection $f$ induces a continuous map
between the complexes $f^\ast:X_b \rightarrow X_a$.
This map will descend to a map on homology, which gives us
the desired $\FSop$-structure. It only remains to show that
these $\FSop$-modules are finitely generated.

Define an $\FSop$-module $M$ over a Noetherian ring $R$ by
setting $M_n$ to be the $R$-linearization
of the power set of $[n]$. This $\FSop$-module is finitely
generated in degrees $\leq 2$. According to
\cite{PR-resonance}*{Lemma 2.1}, the tensor powers
$M^{\otimes (q+1)}$ --- defined in the obvious point-wise way
--- are finitely generated in degrees $\leq 2^{q+1}$.
It follows that the exterior power $\bigwedge^q M$ is also
generated in degrees $\leq 2^{q+1}$.

Next, for $q \geq 0$, define $C^{q}$ to be the $R$-module of
simplicial $q$-cochains of $X_n$ --- see
Section \ref{subsection:homology}.
For any $q$ the $\FSop$-module
$C^q$ is seen to be a submodule of $\bigwedge^{q+1} M$.
By the Noetherianity of $\FSop$-modules,
we conclude that $C^q$ is $2^{q+1}$-small, and therefore
the same can be said about the homology groups 
$\widetilde{H}_q(X_\bullet;R)$.
This concludes the proof.
  \qed
\end{proof}

%
%
\section*{References}

\begin{biblist}

\bib{MR1013569}{book}{
   author={Berge, Claude},
   title={Hypergraphs},
   series={North-Holland Mathematical Library},
   volume={45},
   note={Combinatorics of finite sets;
   Translated from the French},
   publisher={North-Holland Publishing Co., Amsterdam},
   date={1989},
   pages={x+255},
   isbn={0-444-87489-5},
}

\bib{MR1333388}{article}{
   author={Bj\"{o}rner, Anders},
   author={Wachs, Michelle L.},
   title={Shellable nonpure complexes and posets. I},
   journal={Trans. Amer. Math. Soc.},
   volume={348},
   date={1996},
   number={4},
   pages={1299--1327},
   issn={0002-9947},
}

\bib{MR1253009}{article}{
   author={Bj\"{o}rner, A.},
   author={Lov\'{a}sz, L.},
   author={Vre\'{c}ica, S. T.},
   author={\v{Z}ivaljevi\'{c}, R. T.},
   title={Chessboard complexes and matching complexes},
   journal={J. London Math. Soc. (2)},
   volume={49},
   date={1994},
   number={1},
   pages={25--39},
   issn={0024-6107},
 }

 \bib{MR1174893}{article}{
   author={Bouc, S.},
   title={Homologie de certains ensembles de $2$-sous-groupes des groupes
   sym\'{e}triques},
   language={French, with French summary},
   journal={J. Algebra},
   volume={150},
   date={1992},
   number={1},
   pages={158--186},
   issn={0021-8693},
}

\bib{MR0385008}{article}{
   author={Brown, Kenneth S.},
   title={Euler characteristics of groups: the $p$-fractional part},
   journal={Invent. Math.},
   volume={29},
   date={1975},
   number={1},
   pages={1--5},
   issn={0020-9910},
}

 \bib{MR0460128}{book}{
   author={Comtet, Louis},
   title={Advanced combinatorics},
   edition={enlarged edition},
   note={The art of finite and infinite expansions.},
   publisher={D. Reidel Publishing Co., Dordrecht},
   date={1974},
   pages={xi+343},
   isbn={90-277-0441-4},
}
 
 

\bib{MR1912799}{article}{
   author={Dong, Xun},
   author={Wachs, Michelle L.},
   title={Combinatorial Laplacian of the matching complex},
   journal={Electron. J. Combin.},
   volume={9},
   date={2002},
   number={1},
   pages={Research Paper 17, 11},
   doi={10.37236/1634},
}

\bib{donkin}{article}{
    author = {Donkin, Stephen},
     title = {On tilting modules for algebraic groups},
   journal = {Math. Z.},
    volume = {212},
      date = {1993},
    number = {1},
     pages = {39--60},
      issn = {0025-5874,1432-1823},
}

\bib{MR1648484}{article}{
   author={Friedman, Joel},
   author={Hanlon, Phil},
   title={On the Betti numbers of chessboard complexes},
   journal={J. Algebraic Combin.},
   volume={8},
   date={1998},
   number={2},
   pages={193--203},
   issn={0925-9899},
   doi={10.1023/A:1008693929682},
}
 
 \bib{MR0600368}{article}{
   author={Howard, F. T.},
   title={Associated Stirling numbers},
   journal={Fibonacci Quart.},
   volume={18},
   date={1980},
   number={4},
   pages={303--315},
   issn={0015-0517},
}

\bib{MR2368284}{book}{
   author={Jonsson, Jakob},
   title={Simplicial complexes of graphs},
   series={Lecture Notes in Mathematics},
   volume={1928},
   publisher={Springer-Verlag, Berlin},
   date={2008},
   pages={xiv+378},
   isbn={978-3-540-75858-7},
}

\bib{MR2536865}{book}{
   author={Lov\'{a}sz, L\'{a}szl\'{o}},
   author={Plummer, Michael D.},
   title={Matching theory},
   note={Corrected reprint of the 1986 original [MR0859549]},
   publisher={AMS Chelsea Publishing, Providence, RI},
   date={2009},
   pages={xxxiv+554},
   isbn={978-0-8218-4759-6},
}

\bib{MR1402473}{book}{
   author={Maunder, C. R. F.},
   title={Algebraic topology},
   note={Reprint of the 1980 edition},
   publisher={Dover Publications, Inc., Mineola, NY},
   date={1996},
   pages={viii+375},
   isbn={0-486-69131-4},
}

\bib{PR-resonance}{article}{
   author={Proudfoot, Nicholas},
   author={Ramos, Eric},
   title={Stability phenomena for resonance arrangements},
   journal={Proc. Amer. Math. Soc. Ser. B},
   volume={8},
   date={2021},
   pages={219--223},
   doi={10.1090/bproc/71},
}

\bib{fs-braid}{article}{
   author={Proudfoot, Nicholas},
   author={Young, Ben},
   title={Configuration spaces, $\rm FS^{op}$-modules,
     and Kazhdan-Lusztig
   polynomials of braid matroids},
   journal={New York J. Math.},
   volume={23},
   date={2017},
   pages={813--832},
}

 \bib{MR0493916}{article}{
   author={Quillen, Daniel},
   title={Homotopy properties of the poset of
     nontrivial $p$-subgroups of a
   group},
   journal={Adv. in Math.},
   volume={28},
   date={1978},
   number={2},
   pages={101--128},
   issn={0001-8708},
 }

 \bib{MR1824028}{book}{
   author={Sagan, Bruce E.},
   title={The symmetric group},
   series={Graduate Texts in Mathematics},
   volume={203},
   edition={2},
   note={Representations, combinatorial algorithms,
     and symmetric functions},
   publisher={Springer-Verlag, New York},
   date={2001},
   pages={xvi+238},
   isbn={0-387-95067-2},
 }

 \bib{sam}{article}{
   author={Sam, Steven V.},
   author={Snowden, Andrew},
   title={Gr\"{o}bner methods for representations of combinatorial
   categories},
   journal={J. Amer. Math. Soc.},
   volume={30},
   date={2017},
   number={1},
   pages={159--203},
   issn={0894-0347},
   doi={10.1090/jams/859},
}

\bib{SW}{article}{,
  title={Torsion in the matching complex and chessboard complex},
  author={Shareshian, John},
  author = {Wachs, Michelle L},
  journal={Advances in Mathematics},
  volume={212},
  number={2},
  pages={525--570},
  year={2007},
  publisher={Elsevier}
}

\bib{MR0210112}{book}{
   author={Spanier, Edwin H.},
   title={Algebraic topology},
   publisher={McGraw-Hill Book Co., New York-Toronto, Ont.-London},
   date={1966},
   pages={xiv+528},
}

\bib{Tost1}{article}{
   author={Tosteson, Philip},
   title={Categorifications of rational
     Hilbert series and characters of
   $FS^{\rm op}$ modules},
   journal={Algebra Number Theory},
   volume={16},
   date={2022},
   number={10},
   pages={2433--2491},
   issn={1937-0652},
   doi={10.2140/ant.2022.16.2433},
}

\bib{Tost2}{article}{
   author={Tosteson, Philip},
   title={Stability in the homology of
     Deligne-Mumford compactifications},
   journal={Compos. Math.},
   volume={157},
   date={2021},
   number={12},
   pages={2635--2656},
   issn={0010-437X},
   review={\MR{4354696}},
   doi={10.1112/s0010437x21007582},
}

 \bib{Wachs07}{article}{
    author={Wachs, Michelle L.},
    title={Poset topology: tools and applications},
    conference={
       title={Geometric combinatorics},
    },
    book={
       series={IAS/Park City Math. Ser.},
       volume={13},
       publisher={Amer. Math. Soc.},
       place={Providence, RI},
    },
    date={2007},
    pages={497--615},
 }

\bib{MR2022345}{article}{
   author={Wachs, Michelle L.},
   title={Topology of matching, chessboard,
     and general bounded degree graph
   complexes},
   note={Dedicated to the memory of Gian-Carlo Rota},
   journal={Algebra Universalis},
   volume={49},
   date={2003},
   number={4},
   pages={345--385},
   issn={0002-5240},
}

\end{biblist}
%
%
%

\end{document}